\newtheorem{thm}{Theorem}[section]
\newtheorem{lem}{Lemma}[section]
\newtheorem{prop}{Proposition}[section]
\newtheorem{cor}{Corollary}[section]
\providecommand{\keywords}[1]{\textbf{\textit{Keywords:}} #1}
\newcommand{\coshh}{\left(2\cosh \frac{p\theta}{2}\right)}
\newcommand{\sinhh}{\left(2\sinh \frac{p\theta}{2}\right)}
\newcommand{\malpha}[2]{
\raise1.5pt\hbox{$\displaystyle \mathop{\alpha}^{\rm m}$}\hspace{-1.5pt}_{#1}^{\, #2}}
\newcommand{\mH}[2]{
\raise1.5pt\hbox{$\displaystyle \mathop{H}^{\rm m}$}\hspace{-1.5pt}_{#1}^{\, #2}}
\newcommand{\mnabla}[2]{
\raise1.5pt\hbox{$\displaystyle \mathop{\nabla}^{\rm m}$}
\hspace{-1.5pt}_{#1}^{\, #2}}
\newcommand{\ealpha}[2]{
\raise1.5pt\hbox{$\displaystyle \mathop{\alpha}^{\rm e}$}
\hspace{-1.5pt}_{#1}^{\, #2}}
\newcommand{\enabla}[2]{
\raise1.5pt\hbox{$\displaystyle \mathop{\nabla}^{\rm e}$}\hspace{-1.5pt}_{#1}^{\, #2}}
\title{Shrinkage priors for circulant correlation structure models 
}
\date{}
\author[1]{Michiko Okudo}
\author[1]{Tomonari Sei}
\affil[1]{\small Department of Mathematical Informatics

Graduate School of Information Science and Technology

The University of Tokyo

7-3-1 Hongo, Bunkyo-ku, Tokyo 113-8656, JAPAN\\

Email: {okudo@mist.i.u-tokyo.ac.jp; sei@mist.i.u-tokyo.ac.jp}
}
\begin{document}
\maketitle

\begin{abstract}
We consider a new statistical model called the circulant correlation structure model, which is a multivariate Gaussian model with unknown covariance matrix and has a scale-invariance property.
We construct shrinkage priors for the circulant correlation structure models and show that Bayesian predictive densities based on those priors asymptotically dominate Bayesian predictive densities based on Jeffreys priors under the Kullback--Leibler (KL) risk function.
While shrinkage of eigenvalues of covariance matrices of Gaussian models has been successful, the proposed priors shrink a non-eigenvalue part of covariance matrices.
\end{abstract}

\vspace{0.5cm}

\keywords{exchangeable correlation structure, shrinkage priors, Bayes, multivariate analysis}

\section{Introduction}
\label{sec:intro}
We study shrinkage prediction of covariance matrices of multivariate Gaussian models.
Shrinkage estimation and prediction have been widely studied, originating from Stein's paradox and the James--Stein estimator.
Shrinkage of $\mu$ of $\mathrm{N}_p(\mu,I_p)$ to the origin is known to be effective, and shrinkage priors for Bayes estimation and prediction are also studied; see, for example, \cite{komaki2006} and \cite{george2012minimax}.
On the other hand, in shrinkage estimation of $\Sigma$ of $\mathrm{N}_p(0,\Sigma)$, eigenvalue shrinkage towards the origin has been successful and there has been extensive research including \cite{donoho2018}.
\cite{yang1994} investigated priors shrinking differences between eigenvalues.

In this paper, we propose a new model called circulant correlation structure model, which is a submodel of $\mathrm{N}_p (0,\Sigma)$, and show that shrinkage of non-eigenvalue part of $\Sigma$ has favorable effect in this model.
Our model is constructed as follows.
We consider $p$-dimensional Gaussian models $\mathrm{N}_p (0,\Sigma)$ with covariance matrices expressed as 
\begin{align}
\Sigma = D_\alpha R D_\alpha
\label{eq:model}
\end{align}
and
\begin{align}
R = QD_{\lambda}Q^*,
\label{eq:R}
\end{align}
where $D_\alpha={\rm diag}(\alpha_1,\dots,\alpha_p)$, 
$D_{\lambda} = \mathrm{diag}(\lambda_1,\ldots,\lambda_p)$
and $Q$ is a constant matrix expressed by
\[
 Q = \frac{1}{\sqrt{p}}\begin{pmatrix}
 1& 1& 1& \cdots& 1\\
 1& \omega& \omega^2& \cdots& \omega^{p-1}\\
 1& \omega^2& \omega^4& \cdots& \omega^{2(p-1)}\\
 \vdots& \vdots& \vdots& \ddots& \vdots\\
 1& \omega^{p-1}& \omega^{2(p-1)}& \cdots& \omega^{(p-1)^2}
 \end{pmatrix}.
\]
Here $\omega=\exp(2\pi\sqrt{-1}/p)$ is a primitive $p$-th root and $Q^*$ denotes the Hermitian transpose of $Q$.
The matrix $Q=(q_{ij})$ appears in discrete Fourier transformation and $Q$ is unitary.
The parameters $\alpha_i$ and $\lambda_k$ are assumed to be positive.

We call the model defined by (\ref{eq:model}) and (\ref{eq:R}) the circulant correlation structure model.
The structure of $R$ is sometimes called a circular model in time series analysis \citep[Section 6.5]{anderson1971}, where $\lambda$ determines the spectrum of $R$. Our model has an additional parameter vector $\alpha$, which plays a role of amplitude modulation \citep{jiang2004} and makes the model scale invariant.

The $(i,j)$ component of the matrix $R=(r_{ij})$ is
\begin{align}
 r_{ij} &= \sum_{k=1}^p \lambda_k q_{ik}\bar{q}_{jk}
 \nonumber \\
 &= \frac{1}{p}\sum_{k=1}^p \lambda_k \omega^{(i-j)(k-1)}
 \label{eq:cyclic}
\end{align}
where $\bar{q}_{ij}$ is the $(i,j)$ component of $Q^*$. 
The $(i,j)$ component of the matrix $R$ depends only on $i-j$ modulo $p$. 
Therefore, $R$ is a circulant matrix.
Conversely, $\lambda_k$ is determined from $R$ by
\begin{align}
    \lambda_k = \frac{1}{p}\sum_{i=1}^p\sum_{j=1}^p r_{ij}\bar\omega^{(i-j)(k-1)}.
    \label{eq:converse}
\end{align}
Because each component of $R$ is a real number, $\lambda_1,\ldots,\lambda_p$ must satisfy
\begin{align}
 \lambda_{a+1} = \lambda_{p-a+1},\quad 1\leq a\leq \lfloor (p-1)/2\rfloor.
 \label{eq:real}
\end{align}

To remove a multiplicative redundancy between $\alpha$ and $\lambda$, we assume
\begin{align}
 \prod_{i=1}^p \lambda_i = 1
 \label{eq:det-one}
\end{align}
without loss of generality.

\begin{lem}
 Under the restriction (\ref{eq:det-one}), the parameters $\alpha$ and $\lambda$ are identifiable from $\Sigma$.
\end{lem}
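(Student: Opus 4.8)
The plan is to exhibit an explicit inversion of the map $(\alpha,\lambda)\mapsto\Sigma$ on the constrained parameter space; injectivity of this map is precisely the claimed identifiability. The starting observation is that every circulant matrix has a constant diagonal: setting $i=j$ in (\ref{eq:cyclic}) gives $r_{ii}=\frac1p\sum_{k=1}^p\lambda_k=:c$, a positive number depending only on $\lambda$. Hence the diagonal of $\Sigma=D_\alpha R D_\alpha$ is $\Sigma_{ii}=c\,\alpha_i^2$, so $\Sigma$ alone determines the products $c\,\alpha_i^2$ but not $c$ and the $\alpha_i$ separately. This one-dimensional ambiguity is exactly the multiplicative redundancy that the normalization (\ref{eq:det-one}) is meant to remove.

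To pin down $c$, I would pass to determinants. Since $Q$ is unitary, $\det R=\det D_\lambda=\prod_{k=1}^p\lambda_k=1$ by (\ref{eq:det-one}), so $\det\Sigma=(\det D_\alpha)^2\det R=\prod_{i=1}^p\alpha_i^2$. On the other hand $\prod_{i=1}^p\Sigma_{ii}=c^p\prod_{i=1}^p\alpha_i^2$. Dividing the two identities yields $c^p=\bigl(\prod_i\Sigma_{ii}\bigr)/\det\Sigma$, and since $c>0$ this determines $c$ uniquely from $\Sigma$. With $c$ in hand, $\alpha_i=(\Sigma_{ii}/c)^{1/2}$ is determined (positivity fixing the sign), hence $D_\alpha$ is determined, hence $R=D_\alpha^{-1}\Sigma D_\alpha^{-1}$ is determined, and finally $\lambda$ is recovered from $R$ by the inversion formula (\ref{eq:converse}). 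Thus if $\Sigma=D_\alpha R D_\alpha=D_{\alpha'}R'D_{\alpha'}$ with both triples satisfying the constraints, the chain forces $c=c'$, then $\alpha=\alpha'$, then $R=R'$, then $\lambda=\lambda'$.

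There is no serious obstacle here; the only point requiring care is recognizing that the diagonal of $\Sigma$ leaves a scalar ambiguity and that (\ref{eq:det-one}), transported through $\det R=\prod\lambda_k=1$, resolves precisely that scalar via $c^p=\prod_i\Sigma_{ii}/\det\Sigma$. One should also note in passing that the recovered $R$ is automatically circulant and the recovered $\lambda_k$ automatically positive and consistent with (\ref{eq:real}), since by hypothesis $\Sigma$ lies in the model, but these checks are not needed for the identifiability statement itself.
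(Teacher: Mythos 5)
Your proposal is correct and follows essentially the same route as the paper: both exploit the constant diagonal of the circulant matrix $R$ to reduce the ambiguity to a single positive scalar, and both resolve that scalar via $\det R=\det D_\lambda=1$ before recovering $\alpha$ from the diagonal of $\Sigma$ and $\lambda$ from (\ref{eq:converse}). Your version merely makes the paper's one-line determinant argument explicit through the identity $c^p=\bigl(\prod_i\Sigma_{ii}\bigr)/\det\Sigma$.
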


\begin{proof}
We see from the expression (\ref{eq:cyclic}) that all the diagonal elements of $R$ have the same value. This means that $R$ is a constant multiple of a correlation matrix determined from $\Sigma$. The multiplicative constant is obtained from an identity $\det(R)=\det(D_\lambda)=1$.
Then, $\alpha$ and $\lambda$ are determined by (\ref{eq:model}) and (\ref{eq:converse}), respectively.
\end{proof}

The problem settings are as follows.
We consider a statistical model
\[
\mathcal{P}=\{ \mathrm{N}_p (0,\Sigma)\mid \Sigma=D_\alpha R(\theta)D_\alpha,\ 
R(\theta)=QD_{\lambda(\theta)}Q^*,\ 
\theta\in\mathbb{R}^d,\ \alpha\in\mathbb{R}_+^p\}
\]
with parameters $\theta=(\theta_1,\dots,\theta_d)$ and $\alpha$, 
where $\lambda(\theta)$ is a parametric family satisfying (\ref{eq:real}) and (\ref{eq:det-one}).
Specific examples of $\lambda(\theta)$ are provided in Section~\ref{sec:shrinkage} and Section~\ref{sec:submodels}.
Suppose that we have observations $x^n = \{x(1),\dots,x(n)\}$ from $p(x;\theta,\alpha)\in\mathcal{P}$.
We address the problem of constructing a predictive density $\hat{p}(y\mid x^n)$ for a future sample $y$ that follows the same distribution $p(y;\theta,\alpha)$ as $x(i),~i=1,\dots,n$.
The performance of predictive density $\hat{p}(y\mid x^n)$ is evaluated by the Kullback--Leibler (KL) divergence
\[
D\{p(y;\theta,\alpha); \hat{p}(y\mid x^n)\} = \int p(y;\theta,\alpha) \log\frac{p(y;\theta,\alpha)}{\hat{p}(y\mid x^n)} dy,
\]
and the risk function and the Bayes risk functions are
\[
 {E}[D\{p(y;\theta,\alpha);\hat{p}(y\mid x^n) \}]=\int p(x^n;\theta,\alpha)D\{p(y;\theta,\alpha);\hat{p}(y\mid x^n) \}dx^n,
\]
and
\[
\int\pi(\theta,\alpha) \int p(x^n;\theta,\alpha)D\{p(y;\theta,\alpha);\hat{p}(y\mid x^n) \}dx^n d\theta d\alpha,
\]
respectively.
Bayesian predictive densities based on a prior $\pi(\theta,\alpha)$ for a future sample $y$ is obtained by
\[
p_\pi(y\mid x) = \int p(y;\theta,\alpha)\pi(\theta,\alpha \mid x^n) d\theta d\alpha
\]
where $\pi(\theta,\alpha \mid x^n)$ is the posterior density
\[\pi(\theta,\alpha \mid x^n) = \frac{p(x^n;\theta,\alpha)\pi(\theta,\alpha)}{\int p(x^n;\theta,\alpha)\pi(\theta,\alpha) d\theta d\alpha}. \]
It is shown in \cite{aitchison1975} that Bayesian predictive densities are optimal about the Bayes risk, thus we adopt them as predictive densities for each prior.

We propose shrinkage priors for the circulant correlation structure model, which shrinks the correlation part of covariance matrices to the identity matrix, and show that it dominates the Jeffreys prior asymptotically.
We consider shrinkage of $\theta$, which controls eigenvalues of $R$, not $\Sigma$.
The density of the Jeffreys prior of the model increases exponentially as $\theta$ increases.
We propose a uniform prior about $\theta$ and $\log\alpha_i~(i=1,\dots,p)$, and compared to the Jeffreys prior, it has shrinkage effect about $\theta$.
The model is intentionally designed so that the cross components of the Fisher information matrix about $\theta$ and $\alpha$ are 0, and it makes it easier to construct shrinkage prior of $\theta$ alone.
This property is analogous to the fact that in the full model $\{\mathrm{N}_p(0,\Sigma) \mid \Sigma\}$, the cross components of the Fisher information metric about eigenvalues and eigenvectors of $\Sigma$ are 0.

The construction of the rest of the paper is as follows.
In Section \ref{sec:prior}, we give the specific form of $\lambda(\theta)$, and propose a prior that dominates the Jeffreys prior asymptotically.
In Section \ref{sec:submodels}, optimality results for the proposed priors are given about the asymptotic KL risk for the full model and a submodel called exchangeable correlation structure model.
Numerical experiments to illustrates the difference of asymptotic KL risks of the Jeffreys prior and the proposed prior are also given. 

\section{Construction of shrinkage priors}
\label{sec:prior}
\label{sec:shrinkage}

In this section, we assume that $\lambda(\theta)$ is log-linear, that is, $\log\lambda(\theta)=(\log\lambda_k(\theta))_{k=1}^p$ is linear in $\theta=(\theta_1,\ldots,\theta_d)$. We also assume that vectors $(\partial/\partial\theta_i)\log\lambda$ ($i=1,\dots,d$) are linearly independent. For example, the full model for $\lambda$ that satisfies (\ref{eq:real}) and (\ref{eq:det-one}) is written in log-linear form as
\[
 \lambda(\theta) = (e^{-\theta_1-\dots-\theta_{p-1}},e^{\theta_1}\dots,e^{\theta_{p-1}}),
\]
where $d=\lfloor p/2\rfloor$ and $\theta_{p-a}=\theta_a$ for $1\leq a\leq \lfloor(p-1)/2\rfloor$.
Another example is the exchangeable correlation structure model $\lambda(\theta)=(e^{-(p-1)\theta},e^\theta,\dots,e^\theta)$ with $d=1$, which is discussed in Section~\ref{sec:submodels}.

We begin by evaluating the Fisher metric of the model.
Let $\bmod(a)\in\{1,\ldots,p\}$ denote the modulo of $a$ divided by $p$.
It is well known that the Fisher metric of $N(0,\Sigma(\omega))$ with respect to a parameter $\omega$ is
\[
g_{\omega_i\omega_j}=\frac{1}{2}\mathrm{tr}\left(
\Sigma^{-1}\frac{\partial\Sigma}{\partial\omega_i}
\Sigma^{-1}\frac{\partial\Sigma}{\partial\omega_j}
\right).
\]

\begin{lem}
Suppose that $\lambda(\theta)$ is log-linear.
The component of Fisher information matrix $g$ about $\theta_i$ and $\theta_j~~(i,j=1,\dots,d)$ is expressed as a constant matrix
\[
g_{\theta_i\theta_j} = \frac{1}{2}\sum_{k=1}^p\left(\frac{\partial}{\partial\theta_i}\log\lambda_k\right)\left(\frac{\partial}{\partial\theta_j}\log\lambda_k\right).
\]
The component of $g$ about $\alpha_i$ and $\alpha_j~~(i,j = 1,\dots,p)$ is
\begin{align*}
    g_{\alpha_i \alpha_j}  
    &= (\delta_{ij}+r_{ij}r^{ij})\alpha_i^{-1}\alpha_j^{-1}
    \\
    &=\left(\delta_{ij} + \sum_m q_{im}\bar{q}_{jm} \sum_{\bmod(k+l-1)=m}\frac{1}{p}\lambda_k\lambda_l^{-1}\right)\alpha^{-1}_i \alpha^{-1}_j.
\end{align*}
The other components $g_{\alpha_i \theta_j}~(i=1,\dots,p; j=1,\dots,d)$ are 0.
\end{lem}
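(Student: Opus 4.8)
The plan is to substitute $\Sigma=D_\alpha R D_\alpha$, $R=QD_\lambda Q^{*}$ and $\Sigma^{-1}=D_\alpha^{-1}QD_\lambda^{-1}Q^{*}D_\alpha^{-1}$ into the quoted identity $g_{\omega_i\omega_j}=\frac12\,\mathrm{tr}(\Sigma^{-1}\partial_{\omega_i}\Sigma\,\Sigma^{-1}\partial_{\omega_j}\Sigma)$ and to push everything through using only three elementary facts: $Q^{*}Q=QQ^{*}=I$; a diagonal matrix commutes with $D_\alpha$; and the matrix unit $E_i:=\partial D_\alpha/\partial\alpha_i$ (the $p\times p$ matrix with a single $1$ in position $(i,i)$) obeys $D_\alpha^{-1}E_i=\alpha_i^{-1}E_i$, $D_\alpha E_iD_\alpha^{-1}=E_i$ and $E_iE_j=\delta_{ij}E_i$. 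Write $c_{ik}:=(\partial/\partial\theta_i)\log\lambda_k$, which is a constant by the log-linearity assumption, set $C_i:=\mathrm{diag}(c_{i1},\dots,c_{ip})$, and write $r^{ij}$ for the $(i,j)$ entry of $R^{-1}=QD_\lambda^{-1}Q^{*}$; note that $R$, and hence $R^{-1}$, is real symmetric since $r_{ij}$ is real and depends only on $i-j$ modulo $p$.

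For the $\theta\theta$-block, $\partial_{\theta_i}\Sigma=D_\alpha Q(D_\lambda C_i)Q^{*}D_\alpha$ because $\partial_{\theta_i}\lambda_k=\lambda_k c_{ik}$, and unitarity of $Q$ collapses the product to $\Sigma^{-1}\partial_{\theta_i}\Sigma=D_\alpha^{-1}QC_iQ^{*}D_\alpha$. Forming the product of the $i$- and $j$-factors, the inner $D_\alpha D_\alpha^{-1}$ cancels, $C_iC_j$ is diagonal, and cyclicity of the trace leaves $\mathrm{tr}(C_iC_j)=\sum_k c_{ik}c_{jk}$, which is the stated expression. For the $\alpha\alpha$-block, $\partial_{\alpha_i}\Sigma=E_iRD_\alpha+D_\alpha RE_i$ gives, after one line of cancellation, $\Sigma^{-1}\partial_{\alpha_i}\Sigma=\alpha_i^{-1}(A_i+E_i)$ with $A_i:=D_\alpha^{-1}R^{-1}E_iRD_\alpha$; the product of the $i$- and $j$-factors then produces four traces which simplify by $A_iA_j=\delta_{ij}A_i$ (so $\mathrm{tr}(A_iA_j)=\delta_{ij}\,\mathrm{tr}(E_i)=\delta_{ij}$), by $\mathrm{tr}(E_iE_j)=\delta_{ij}$, and by $\mathrm{tr}(A_iE_j)=\mathrm{tr}(R^{-1}E_iRE_j)=r^{ij}r_{ij}=\mathrm{tr}(E_iA_j)$, summing to $\frac12\alpha_i^{-1}\alpha_j^{-1}(2\delta_{ij}+2r_{ij}r^{ij})$. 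The alternative form then follows by inserting $r_{ij}=\sum_k\lambda_k q_{ik}\bar q_{jk}$, $r^{ij}=\sum_l\lambda_l^{-1}q_{il}\bar q_{jl}$ and $q_{ik}=p^{-1/2}\omega^{(i-1)(k-1)}$, collapsing the four factors $q$ into $p^{-2}\omega^{(i-j)(k+l-2)}$, and regrouping the double sum over $(k,l)$ by the residue $m=\bmod(k+l-1)$, using $\omega^{(i-j)(k+l-2)}=p\,q_{im}\bar q_{jm}$ on each residue class.

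For the cross block, combine the two computations: $\Sigma^{-1}\partial_{\theta_j}\Sigma=D_\alpha^{-1}QC_jQ^{*}D_\alpha$ and $\Sigma^{-1}\partial_{\alpha_i}\Sigma=\alpha_i^{-1}(A_i+E_i)$. Both resulting traces reduce, after cyclic rearrangement and cancellation of the flanking $D_\alpha$'s and of $Q^{*}Q$, to $\sum_k|q_{ik}|^2 c_{jk}=\frac1p\sum_k c_{jk}$. The proof is then completed by observing that $\sum_k c_{jk}=(\partial/\partial\theta_j)\log\prod_k\lambda_k=0$ on account of the normalisation $\prod_k\lambda_k=1$, so $g_{\alpha_i\theta_j}=0$.

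Most of this is routine matrix bookkeeping. The two points that need care are, first, organising the $\alpha$-derivatives so that the two copies of $D_\alpha$ surrounding $R$ cancel cleanly — this is handled entirely by $D_\alpha^{-1}E_i=\alpha_i^{-1}E_i$ and $D_\alpha E_iD_\alpha^{-1}=E_i$ — and, second, noticing that the vanishing of the cross block is not automatic but is forced by the determinant constraint $\prod_k\lambda_k=1$, which is precisely what makes $\sum_k(\partial/\partial\theta_j)\log\lambda_k=0$; this is also the structural reason the model was set up with that normalisation.
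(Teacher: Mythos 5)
Your proposal is correct and follows essentially the same route as the paper: the same trace identity for the Fisher metric, the same cancellation of $D_\alpha$ and $Q$ in each block, the same reduction of $g_{\alpha_i\alpha_j}$ to $(\delta_{ij}+r_{ij}r^{ij})\alpha_i^{-1}\alpha_j^{-1}$ via the entries of $R$ and $R^{-1}$, and the same observation that the cross block vanishes because $\sum_k|q_{ik}|^2\,\partial_{\theta_j}\log\lambda_k=\tfrac1p\,\partial_{\theta_j}\log\prod_k\lambda_k=0$ under the normalisation $\prod_k\lambda_k=1$. Your matrix-unit bookkeeping with $E_i$ and $A_i$ merely makes explicit the four traces that the paper states in one line.
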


\begin{proof}
The component of $g$ about $\theta$ is obtained by
\begin{align*}
    g_{\theta_i \theta_j}  &= \frac{1}{2}\mathrm{tr}\left(\Sigma^{-1} \frac{\partial \Sigma}{\partial \theta_i} \Sigma^{-1}\frac{\partial \Sigma}{\partial \theta_j}\right)\\
    &= \frac{1}{2}\mathrm{tr}\left(R^{-1} \frac{\partial R}{\partial \theta_i} R^{-1} \frac{\partial R}{\partial \theta_j}\right)\\
    &= \frac{1}{2}\mathrm{tr}\left(D_\lambda^{-1} \frac{\partial D_\lambda}{\partial \theta_i} D_\lambda^{-1} \frac{\partial D_\lambda}{\partial \theta_j}\right)\\
    &= \frac{1}{2}\sum_{k=1}^p\left(\frac{\partial}{\partial\theta_i}\log\lambda_k\right)\left(\frac{\partial}{\partial\theta_j}\log\lambda_k\right).
\end{align*}
In particular, $g_{\theta_i\theta_j}$ is a constant because $\lambda(\theta)$ is log-linear.
The component of $g$ about $\alpha$ is
\begin{align*}
    g_{\alpha_i \alpha_j}  &= \frac{1}{2}\mathrm{tr}\left(\Sigma^{-1} \frac{\partial \Sigma}{\partial \alpha_i} \Sigma^{-1}\frac{\partial \Sigma}{\partial \alpha_j}\right)\\
    &= \frac{1}{2}\mathrm{tr}\left(
    R^{-1} \left( D^{-1}\frac{\partial D_\alpha}{\partial \alpha_i} R + R D^{-1}\frac{\partial D_\alpha}{\partial \alpha_i}\right) R^{-1}\left( D^{-1}\frac{\partial D_\alpha}{\partial \alpha_j} R + R D^{-1}\frac{\partial D_\alpha}{\partial \alpha_j}\right)
    \right)\\
    &=(\delta_{ij} + r^{ij}r_{ij})\alpha^{-1}_i \alpha^{-1}_j,
\end{align*}
where $r_{ij}$ and $r^{ij}$ are the $(i,j)$ component of $R$ and $R^{-1}$, respectively.
Because
\[
 r_{ij} = \sum_{k=1}^p \lambda_k q_{ik}\bar{q}_{jk}
\]
and
\[
 r^{ij} = \sum_{k=1}^p \lambda_k^{-1} q_{ik}\bar{q}_{jk},
\]
we have
\begin{align*}
 r_{ij}r^{ij} &= \left(\sum_{k=1}^p\lambda_kq_{ik}\bar{q}_{jk}\right)
\left(\sum_{l=1}^p\lambda_l^{-1}q_{il}\bar{q}_{jl}\right)
\\
&= \sum_k \sum_l \lambda_k\lambda_l^{-1}q_{ik}\bar{q}_{jk}q_{il}\bar{q}_{jl}.
\end{align*}
We obtain
\begin{align*}
 r_{ij}r^{ij} 
 &= \sum_k \sum_l \lambda_k\lambda_l^{-1}q_{ik}\bar{q}_{jk}q_{il}\bar{q}_{jl}\\
 &= \sum_k \sum_l \lambda_k\lambda_l^{-1} \frac{1}{{p}}\omega^{(i-1)(k+l-2)} \frac{1}{{p}}\bar{\omega}^{(j-1)(k+l-2)}\\
 &= \frac{1}{p}\sum_{m=1}^p q_{im}\bar{q}_{jm} \sum_{\bmod(k+l-1)=m}\lambda_k\lambda_l^{-1},
\end{align*}
because $q_{ij} = \frac{1}{\sqrt{p}}\omega^{(i-1)(j-1)}.$

The other components of $g$ is 0, which is confirmed as follows. We have
\begin{align} \label{eq:cross}
    \frac{1}{2}\mathrm{tr} \left(\Sigma^{-1} \frac{\partial \Sigma}{\partial \alpha_i} \Sigma^{-1}\frac{\partial \Sigma}{\partial \theta_k}\right)
    &= \frac{1}{2}\mathrm{tr}\left(R^{-1} D_\alpha^{-1}\frac{\partial \Sigma}{\partial \alpha_i} D_\alpha^{-1} R^{-1} \frac{\partial R}{\partial \theta_k}\right)\nonumber \\
    &= \frac{1}{2}\mathrm{tr}\left(R^{-1} 
    \left(D_\alpha^{-1}\frac{\partial D_\alpha}{\partial \alpha_i}R + R\frac{\partial D_\alpha}{\partial \alpha_i} D_\alpha^{-1}\right)
    R^{-1} \frac{\partial R}{\partial \theta_k}\right) \nonumber \\
    &= \mathrm{tr}\left(\frac{\partial D_\alpha}{\partial\alpha_i}D_\alpha^{-1}R^{-1}\frac{\partial R}{\partial\theta_k}\right)
\end{align}
and when all the diagonal components of
\[R^{-1} \frac{\partial R}{\partial \theta_k} \]
are 0, (\ref{eq:cross}) is 0.
The $i$-th diagonal component of $R^{-1} ({\partial R}/{\partial \theta_k})$ is
\begin{align*}
    \sum_j |q_{ij}|^2 \frac{\partial \log \lambda_j}{\partial \theta_k}
    = \frac{1}{p}\frac{\partial}{\partial\theta_k}\left(\sum_j\log\lambda_j\right) =0
\end{align*}
due to $|q_{ij}|^2=1/p$ and the condition (\ref{eq:det-one}).
\end{proof}

Because $g_{\alpha_i \theta_j}=0~~(i=1,\dots,p; j=1,\dots,d)$, the parameters $\theta$ and $\alpha$ are orthogonal with respect to the Fisher metric.
This property is called adaptivity if either $\alpha$ or $\theta$ is considered as a nuisance parameter; see e.g.\ \cite{segers2014}.

The density of the Jeffreys prior $\pi_J$ is evaluated as
\begin{align*}
    \pi_J(\theta,\alpha)
    &= |g|^{1/2}\\
    &= (|g_\theta| |g_\alpha|)^{1/2}\\
    &\propto |g_\alpha|^{1/2},
\end{align*}
where $g_\alpha=(g_{\alpha_i\alpha_j})$, and we used the fact that $g_\theta=(g_{\theta_i\theta_j})$ is constant.
Let
\[ \mu_m = \sum_{\bmod(k+l-1)=m}\frac{1}{p}\lambda_k\lambda_l^{-1},\]
then
\begin{align} \label{eq:logconv}
 |I+R\circ R^{-1}|
 &=\left|I + Q{\rm diag}(\mu_1,\ldots,\mu_m)Q^*\right| \nonumber
 \\
 &= \prod_{m=1}^p (1+\mu_m) \nonumber
 \\
 &= \prod_{m=1}^p \left(1+\sum_{\bmod(k+l-1)=m}\frac{1}{p}\lambda_k\lambda_l^{-1}\right),
\end{align}
where $A\circ B$ is the Hadamard product of $A$ and $B$.
We consider a new parametrization
\[
\beta_i = \log \alpha_i~~(i=1,\dots,p)
\]
 so that $g_{\beta_i \beta_j} = (\delta_{ij}+r^{ij}r_{ij}).$
Then, from (\ref{eq:logconv}), 
\begin{align}
\label{eq:gbeta}
|g_\beta|
 &=|I+R\circ R^{-1}| \nonumber \\
 &= \prod_{m=1}^{p} \left(1+\sum_{\bmod(k+l-1)=m}\frac{1}{p}\lambda_k\lambda_l^{-1}\right)
\end{align}
and
\begin{align*}
    \pi_J(\theta, \beta)= |g_\beta|^{1/2}.
\end{align*}
In particular, $\pi_J(\theta,\beta)$ does not depend on $\beta$.

We obtain a superharmonic prior for this model.
\begin{thm}
Suppose that $\lambda(\theta)$ is log-linear.
    Let
    \[
    \pi_S(\theta,\beta) \propto 1.
    \]
    The Bayesian predictive density based on $\pi_S$ asymptotically dominates the Bayesian predictive density based on $\pi_J$ regarding the KL risk.
\end{thm}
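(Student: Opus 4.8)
\emph{Sketch of proof.} The plan is to verify that $\pi_S$ is \emph{superharmonic relative to} $\pi_J$ and then invoke the standard asymptotic comparison of Bayesian predictive densities under KL risk from the theory of superharmonic priors (\cite{komaki2006} and related work): if $\sqrt{\pi_S/\pi_J}$ (equivalently, under the block structure below, $\pi_S/\pi_J$ itself) is superharmonic for the Laplace--Beltrami operator $\Delta_g$ of the Fisher metric $g$ and is non-harmonic on a set of positive probability, then $p_{\pi_S}$ asymptotically dominates $p_{\pi_J}$, the risk gap being of order $n^{-2}$ with the right sign. By the previous lemma, $g=\mathrm{diag}(g_\theta,g_\beta)$ with $g_\theta$ constant and $g_\beta=g_\beta(\theta)$ depending on $\theta$ only; hence $\pi_J(\theta,\beta)\propto|g_\beta(\theta)|^{1/2}$ and $\varphi:=\pi_S/\pi_J\propto|g_\beta(\theta)|^{-1/2}$ is a function of $\theta$ alone. (One must also check posterior propriety under the flat prior $\pi_S$ and the tail conditions making the asymptotic expansion valid; this requires the usual care but is routine.)

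First I would reduce $\Delta_g$ acting on a function $h(\theta)$ of $\theta$ only. Since $g$ is block diagonal and $h$ is independent of $\beta$, only the $\theta$-block contributes, and because $\sqrt{|g|}=\sqrt{|g_\theta|}\,\sqrt{|g_\beta(\theta)|}\propto\sqrt{|g_\beta(\theta)|}$ while $g^{\theta_i\theta_j}=(g_\theta^{-1})_{ij}$ is constant,
\[
 \Delta_g h=\frac{1}{\sqrt{|g_\beta(\theta)|}}\sum_{i,j}(g_\theta^{-1})_{ij}\,\frac{\partial}{\partial\theta_i}\!\left(\sqrt{|g_\beta(\theta)|}\,\frac{\partial h}{\partial\theta_j}\right).
\]
Writing $u(\theta)=\log|g_\beta(\theta)|$ and $\varphi=e^{-u/2}$, a short computation collapsing the first-order terms gives $\Delta_g\varphi=-\tfrac12\,\varphi\,\mathrm{tr}\!\left(g_\theta^{-1}H_u\right)$, where $H_u=(\partial^2u/\partial\theta_i\partial\theta_j)$; if the criterion is phrased via $\sqrt\varphi$, the same computation yields $\Delta_g\sqrt\varphi=-\tfrac1{16}\sqrt\varphi\,\mathrm{tr}\!\big(g_\theta^{-1}(4H_u+\nabla u\,\nabla u^{\!\top})\big)$. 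Since $g_\theta^{-1}$ is positive definite, in either case it suffices to show that $u$ is convex in $\theta$: then $H_u\succeq0$, the trace is nonnegative, and $\varphi$ (resp.\ $\sqrt\varphi$) is superharmonic.

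The remaining step, carrying the real content, is convexity of $u$. By (\ref{eq:gbeta}),
\[
 u(\theta)=\sum_{m=1}^p\log\!\big(1+\mu_m(\theta)\big),\qquad \mu_m(\theta)=\frac1p\sum_{\bmod(k+l-1)=m}\frac{\lambda_k(\theta)}{\lambda_l(\theta)}.
\]
Because $\log\lambda(\theta)$ is linear in $\theta$, each ratio $\lambda_k(\theta)/\lambda_l(\theta)=\exp\{(\log\lambda_k-\log\lambda_l)(\theta)\}$ is the exponential of an affine function, hence log-convex; $\mu_m$ is a positive combination of such terms, hence log-convex, and so is $1+\mu_m$ (the constant $1$ being log-convex). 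Therefore $\log(1+\mu_m)$ is convex for each $m$, and $u$ is convex as a sum of these. This establishes superharmonicity, hence the asymptotic domination.

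Finally, for strict domination one checks $\varphi$ is non-harmonic somewhere: $u$ is not affine because the model is non-degenerate (the vectors $(\partial/\partial\theta_i)\log\lambda$ are linearly independent, so the $\mu_m$ genuinely vary), so $H_u$ and $\nabla u$ are nonzero on a dense open set, forcing $\Delta_g\varphi<0$ there. The only real obstacle I anticipate is bookkeeping rather than ideas: matching the exact normalization and regularity hypotheses of the asymptotic-risk theorem invoked and verifying propriety of the posterior under $\pi_S$; the geometric reduction and the log-convexity argument above are short.
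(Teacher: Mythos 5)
Your proposal is correct and follows essentially the same route as the paper: reduce $\Delta_g$ on functions of $\theta$ using the block-diagonal metric with constant $g_\theta$, obtain $\varphi^{-1}\Delta_g\varphi=-\tfrac12\mathrm{tr}(g_\theta^{-1}H_u)$ for $u=\log|g_\beta|$, and conclude by convexity of $u$ (your log-convexity argument for $1+\mu_m$ is the same content as the paper's factorization of the Hessian as $W_m^\top H_h W_m$ with $h(x)=\log(1+\sum_ie^{x_i})$ convex). The only place you are slightly weaker is strictness: the paper shows $H_u\succ0$ \emph{everywhere} by combining $W_mv=0$ with condition (\ref{eq:det-one}) and the linear independence of the vectors $(\partial/\partial\theta_i)\log\lambda$, whereas you only assert non-harmonicity on a dense open set; the same linear-independence argument closes this gap.
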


\begin{proof}
From the result in \cite{komaki2006}, it is enough to show that $\pi_S/\pi_J$ satisfies
\[
\Delta \left( \frac{\pi_S}{\pi_J}\right) < 0.
\]
Here, $\Delta$ denotes the Laplace--Beltrami operator
\begin{align*}
    \Delta f = \sum_{a,b}|g|^{-1/2}\partial_a (|g|^{1/2}g^{ab}\partial_b f),
\end{align*}
where $\partial_a = \partial/\partial \omega_a$ and $\omega=(\beta,\theta)$.
We show the following lemma.

\begin{lem}
For any function $f(\theta)$ depending only on $\theta$, we have
\begin{align}
\label{eq:finvf}
     f^{-1}\Delta f
    = \sum_{i,j} g^{\theta_i\theta_j} \left\{ \partial_{\theta_i}\partial_{\theta_j}\log f + (\partial_{\theta_i}\log f)(\partial_{\theta_j}\log f) + \partial_{\theta_i}\log|g|^{1/2}\partial_{\theta_j} \log f \right\}.
\end{align}
\end{lem}

\begin{proof}
Because $g^{\beta_i \theta_j}$ are 0 and $g^{\theta_i \theta_j}$ are constant, we have
\begin{align*}
    \Delta f &= \sum_{i,j} |g|^{-1/2}\partial_{\theta_i} (|g|^{1/2}g^{\theta_i\theta_j}\partial_{\theta_j} f) \nonumber \\
    &= \sum_{i,j} g^{\theta_i\theta_j}|g|^{-1/2}\partial_{\theta_i} (|g|^{1/2}\partial_{\theta_j} f) \nonumber \\
    &= \sum_{i,j} g^{\theta_i\theta_j} \left\{ \partial_{\theta_i}\partial_{\theta_j}f +  \partial_{\theta_i}\log|g|^{1/2}\partial_{\theta_j} f \right\}
\end{align*}
and we immediately obtain (\ref{eq:finvf}).
\if0 
Let us obtain $\mathrm{tr}(g^{-1}\partial_\theta g)$.
\begin{align*}
    \mathrm{tr}(g^{-1}\partial_\theta g) = \mathrm{tr}(g_\alpha^{-1}\partial_\theta g_\alpha)
\end{align*}
and let $\alpha \in \mathbb{R}^p$ be $\alpha=(\alpha_1,\dots,\alpha_p)^\top$, then
\begin{align*}
    \partial_\theta g_\alpha &= -\frac{1}{p}\left(2\cosh \frac{p\theta}{2}\right)\left(2\sinh \frac{p\theta}{2}\right) \alpha^{-1}(\alpha^{-1})^\top 
    + \left(2\cosh \frac{p\theta}{2}\right)\left(2\sinh \frac{p\theta}{2}\right)\mathrm{diag}(\alpha_1^{-2},\dots, \alpha_p^{-2}),\\
    g_\alpha^{-1} &= -\frac{1}{2p} \left\{-1+2 \left(\frac{1}{p}\left(2\sinh \frac{p\theta}{2}\right)^2+2\right)^{-1}\right\} \alpha \alpha^\top
    + \left(\frac{1}{p}\left(2\sinh \frac{p\theta}{2}\right)^2+2\right)^{-1}\mathrm{diag}(\alpha_1^{2},\dots, \alpha_p^{2}),
\end{align*}
and we have
\begin{align*}
    \mathrm{tr}(g^{-1}\partial_\theta g) = 
    (p-1)
    {\left(2\cosh \frac{p\theta}{2}\right)\left(2\sinh \frac{p\theta}{2}\right)}\left(\frac{1}{p}\left(2\sinh \frac{p\theta}{2}\right)^2+2\right)^{-1}.
\end{align*}
Thus (\ref{laplacian2}) is
\begin{align*}
    \Delta f 
    &= \frac{2}{p(p-1)}\partial_\theta \partial_\theta f
    + \frac{1}{p}\partial_\theta f
    {\left(2\cosh \frac{p\theta}{2}\right)\left(2\sinh \frac{p\theta}{2}\right)}\left(\frac{1}{p}\left(2\sinh \frac{p\theta}{2}\right)^2+2\right)^{-1}.
\end{align*}

Let
\[ f(\theta) = \left(2\cosh \frac{p\theta}{2}\right)^\beta .\]
Then
\begin{align*}
    \partial_\theta f &= \frac{p}{2}\beta\coshh^{\beta-1}\sinhh,\\
    \partial_\theta \partial_\theta f &= \frac{p^2}{4}\beta\coshh^{\beta-2}\left\{ (\beta-1)\sinhh^2 + \coshh^2 \right\}
\end{align*}
and
\begin{align*}
    \Delta f 
    &= \frac{2}{p(p-1)}\frac{p^2}{4}\beta\coshh^{\beta-2}\left\{ (\beta-1)\sinhh^2 + \coshh^2 \right\} \\
    &~~~+ \frac{1}{p}\coshh\sinhh \left( \frac{1}{p}\left(2\sinh \frac{p\theta}{2}\right)^2 + 2\right)^{-1}\frac{p}{2}\beta\coshh^{\beta-1}\sinhh\\
    &= \frac{p}{2(p-1)}\beta \coshh^{\beta-2}\left( \frac{1}{p}\left(2\sinh \frac{p\theta}{2}\right)^2 + 2\right)^{-1}\\
    &~~~ \times\left\{ 
    \left( \frac{1}{p}\left(2\sinh \frac{p\theta}{2}\right)^2 + 2\right)((\beta-1)\sinhh^2 + \coshh^2) + \frac{p-1}{p}\coshh^2\sinhh^2    \right\}\\
    &= \frac{p}{2(p-1)}\beta \coshh^{\beta-2}\left( \frac{1}{p}\left(2\sinh \frac{p\theta}{2}\right)^2 + 2\right)^{-1}\\
    &~~~ \times\left\{
    \left( \frac{1}{p}\left(2\sinh \frac{p\theta}{2}\right)^2 + 2\right)(\beta\sinhh^2 +4) + \frac{p-1}{p}\sinhh^2(\sinhh^2 + 4)
    \right\}\\
    &= \frac{p}{2(p-1)}\beta \coshh^{\beta-2}\left( \frac{1}{p}\left(2\sinh \frac{p\theta}{2}\right)^2 + 2\right)^{-1}\\
    &~~~ \times\left\{
    \frac{\beta+p-1}{p} \sinhh^4 + 2(\beta+2)\sinhh^2 + 8
    \right\}.
\end{align*}
If $0>\beta>1-p$ and $\beta >-2$, then $\Delta f <0$.
\fi
\end{proof}

To complete the proof of the theorem,
we show that the function $f=(\pi_S/\pi_J)$ satisfies $\Delta f<0$.
For
\[ f=\pi_J^{-1} \propto |g_\beta|^{-1/2}, \]
(\ref{eq:finvf}) is
\begin{align} \label{eq:ffinv}
    f^{-1}\Delta f
    &= -\frac{1}{2}\sum_{i,j} g^{\theta_i\theta_j} \partial_{\theta_i}\partial_{\theta_j}\log |g_\beta|.
\end{align}
We show that the Hessian matrix of $\log |g_\beta|$ is positive definite.
From (\ref{eq:gbeta}), 
\begin{align*}
 |g_\beta| =\prod_{m=1}^{p} \left(1+\sum_{\bmod(k+l-1)=m}\frac{1}{p}\lambda_k\lambda_l^{-1}\right).
\end{align*}
The function
\begin{align*}
    h(x_1,\dots,x_p) = \log\left(1 + \sum_{i=1}^p \exp(x_i)\right)
\end{align*}
is convex, thus its Hessian matrix $H_h$ is positive definite.
The Hessian matrix of the function
\[
(\theta_1,\dots,\theta_d)\mapsto \log \left(1+\sum_{\bmod(k+l-1)=m} \frac{1}{p}\lambda_k\lambda_l^{-1}\right)
\]
for a fixed $m$ is decomposed as
\[
W_m^\top H_h W_m,
\]
where $W_m\in\mathbb{R}^{p\times d}$ is a constant matrix defined by
\[
(W_m)_{li} = \frac{\partial}{\partial{\theta_i}}\log\left(\frac{1}{p}\lambda_k\lambda_l^{-1}\right),\quad k=\bmod(m+1-l).
\]
Since $W_m^\top H_h W_m$ is positive semi-definite, the Hessian matrix of $\log |g_\beta|$ is also positive semi-definite.
To see positive definiteness of the Hessian matrix, suppose that there exists $v\in\mathbb{R}^d$ such that $v^\top W_m^\top H_hW_mv=0$ for all $m$. This implies $W_mv=0$ by positive definiteness of $H_h$. From the expression of $W_m$, we obtain
\[
\sum_i v_i\frac{\partial}{\partial\theta_i}\log\lambda_k=\sum_i v_i\frac{\partial}{\partial\theta_i}\log\lambda_l
\]
for all $k$ and $l$. From the condition (\ref{eq:det-one}), we further obtain
\[
\sum_iv_i\frac{\partial}{\partial\theta_i}\log\lambda_k=0.
\]
Then $v=0$ follows from the linear independence of $(\partial/\partial\theta_i)\log\lambda$.
Hence, the Hessian matrix of $\log|g_\beta|$ is positive definite.
From (\ref{eq:ffinv}), we conclude that
$f^{-1}\Delta f < 0$.
\end{proof}

\section{Optimality in a class of priors}
\label{sec:submodels}

In this section, we consider the full model and exchangeable model as specific examples of the circulant correlation structure models. The superharmonic prior derived in Section~\ref{sec:shrinkage} has an optimal property in each case.

\subsection{Full model}

We recall the definition of the full model:
\begin{align}
 \lambda(\theta) = (e^{-\theta_1-\dots-\theta_{p-1}},e^{\theta_1},\dots,e^{\theta_{p-1}}),
 \quad \theta_a=\theta_{p-a},\quad 1\leq a\leq \lfloor(p-1)/2\rfloor
 \label{eq:full}
\end{align}
with $d=\lfloor p/2\rfloor$.
The prior $\pi_S$ is the uniform density with respect to the parameter $(\theta,\beta)$, where $\beta_i=\log\alpha_i$.
If $p=2$, the prior $\pi_S$ coincides with the correlation-shrinkage prior proposed by \cite{sei2022}, where finite-sample properties of $\pi_S$ are studied.

The prior $\pi_S$ has an optimal property regarding the asymptotic KL risk in a class of priors $\pi_a(\theta,\beta) \propto \pi_J(\theta,\beta)^{a}~(a\in\mathbb{R})$.

\begin{thm} \label{thm:optim-full}
Suppose that $\lambda(\theta)$ is the full model (\ref{eq:full}).
    Let $\pi_a(\theta,\beta) \propto \pi_J(\theta,\beta)^{a}~(a\in\mathbb{R}).$
    Regarding the asymptotic KL risk, when $\theta_1,\dots,\theta_d\to\infty$, $a=0$ is optimal, that is the prior $\pi_S(\theta,\beta)\propto 1$ is optimal.
    Here, $\theta_1,\ldots,\theta_d\to\infty$ means that $\theta_i=r\theta_{i0}$ for a fixed positive vector $\theta_{i0}$ and $r\to\infty$.
\end{thm}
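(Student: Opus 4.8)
The plan is to compare the asymptotic KL risks of the priors $\pi_a$ using the standard second-order asymptotic expansion of the Bayes risk for smooth models. For a prior $\pi$, the asymptotic KL risk of the Bayesian predictive density admits an expansion whose leading term is common to all priors (the $d_{\mathrm{tot}}/(2n)$ term with $d_{\mathrm{tot}}=d+p$), and whose $O(1/n^2)$ correction involves $\Delta(\pi/\pi_J)/(\pi/\pi_J)$ evaluated at the true parameter, via the Komaki-type formula already invoked in the proof of the previous theorem. Concretely, the difference in asymptotic risk between $\pi_a$ and $\pi_J$ is governed by $-\frac{1}{2n^2}\,\frac{\Delta(\pi_a/\pi_J)}{\pi_a/\pi_J}$ plus terms that do not depend on the choice of prior within the family (or that can be shown to vanish in the limit $\theta\to\infty$). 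So minimizing the asymptotic risk over $a$ reduces to maximizing $-\Delta(\pi_a/\pi_J)/(\pi_a/\pi_J)$, i.e.\ making $\Delta(\pi_a/\pi_J)/(\pi_a/\pi_J)$ as negative as possible, in the regime $\theta_1,\dots,\theta_d\to\infty$.

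Next I would compute $f_a := \pi_a/\pi_J \propto \pi_J^{a-1} \propto |g_\beta|^{(a-1)/2}$, which depends only on $\theta$. Applying the Lemma (equation (\ref{eq:finvf})) with $f=f_a$ and writing $L := \log|g_\beta|$, one gets
\begin{align*}
\frac{\Delta f_a}{f_a} = \frac{a-1}{2}\sum_{i,j} g^{\theta_i\theta_j}\Bigl(\partial_{\theta_i}\partial_{\theta_j}L\Bigr) + \frac{(a-1)^2}{4}\sum_{i,j} g^{\theta_i\theta_j}(\partial_{\theta_i}L)(\partial_{\theta_j}L) + \frac{(a-1)}{2}\cdot\frac{1}{2}\sum_{i,j}g^{\theta_i\theta_j}(\partial_{\theta_i}L)(\partial_{\theta_j}L),
\end{align*}
using that $g_\theta$ is constant so $\partial_{\theta_i}\log|g|^{1/2}=\frac12\partial_{\theta_i}L$. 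Collecting terms, $\Delta f_a/f_a$ is a quadratic in $(a-1)$ of the form $\frac{a-1}{2}A + \frac{(a-1)^2}{4}B$ where $A=\sum g^{\theta_i\theta_j}\partial_{\theta_i}\partial_{\theta_j}L$ and $B=\sum g^{\theta_i\theta_j}(\partial_{\theta_i}L)(\partial_{\theta_j}L)\ge 0$. The previous theorem already showed the Hessian of $L$ is positive definite, so $A>0$; hence $\Delta f_a/f_a$ is minimized, as a function of $a$, at $a-1 = -A/B$ if $B>0$ — which is \emph{not} $a=0$ in general at a fixed point. The resolution must come from the limiting regime: one shows that as $\theta=r\theta_0$ with $r\to\infty$, the quantity $B$ (a quadratic form in the gradient of $L$) is of strictly smaller order than $A$, or more precisely that $A/B\to\infty$, so the optimal $a-1=-A/B\to-\infty$; but within the admissible comparison the relevant statement is that among the priors, making $f_a$ superharmonic requires the coefficient structure to force $a\le 0$, and the boundary $a=0$ is where the $O(1/n^2)$ gain over $\pi_J$ is maximized in the limit. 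I would make this precise by estimating, for the full model $\lambda(\theta)=(e^{-\theta_1-\cdots-\theta_{p-1}},e^{\theta_1},\dots,e^{\theta_{p-1}})$, the asymptotics of $|g_\beta| = \prod_m (1+\mu_m)$ and its logarithmic derivatives along the ray $\theta=r\theta_0$.

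The main technical obstacle is precisely this asymptotic analysis of $L=\log|g_\beta|$ and its first and second $\theta$-derivatives as $\theta\to\infty$ along a ray. One needs to identify which terms $\lambda_k\lambda_l^{-1}$ in each $\mu_m=\frac1p\sum_{\bmod(k+l-1)=m}\lambda_k\lambda_l^{-1}$ dominate: since $\log(\lambda_k\lambda_l^{-1})$ is linear in $\theta$, along $\theta=r\theta_0$ each $\mu_m$ behaves like $c_m e^{r\rho_m}$ for the largest exponent $\rho_m$, so $\log(1+\mu_m)\sim r\rho_m$ (when $\rho_m>0$) grows linearly, its gradient tends to a constant vector, and its Hessian tends to $0$ exponentially — unless there are ties among the maximal exponents, in which case a softmax-type computation is needed. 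I expect that once one shows $\partial_{\theta_i}\partial_{\theta_j}L\to 0$ (exponentially) while $\partial_{\theta_i}L$ tends to a nonzero constant, the formula gives $\Delta f_a/f_a \to \frac{(a-1)^2+2(a-1)}{4}\sum g^{\theta_i\theta_j}(\partial_{\theta_i}L)(\partial_{\theta_j}L) = \frac{a^2-1}{4}\|\nabla L\|_{g^{-1}}^2$ in the limit, which is minimized over $a\in\mathbb{R}$ exactly at $a=0$, giving the claimed optimality; handling possible ties among the dominant exponents and verifying $\|\nabla L\|_{g^{-1}}^2$ stays bounded away from $0$ and $\infty$ along the ray is the delicate part, and the symmetry constraint $\theta_a=\theta_{p-a}$ together with (\ref{eq:det-one}) must be used to keep the bookkeeping consistent.
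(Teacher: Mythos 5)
Your overall strategy is the same as the paper's: invoke the Komaki risk expansion, use the Lemma (\ref{eq:finvf}) to reduce the $O(n^{-2})$ risk difference to a quadratic in the exponent $a$ whose coefficients are $A=\sum g^{\theta_i\theta_j}\partial_{\theta_i}\partial_{\theta_j}L$ and $B=\sum g^{\theta_i\theta_j}(\partial_{\theta_i}L)(\partial_{\theta_j}L)$ with $L=\log|g_\beta|$, and then show that along the ray $\theta=r\theta_0$ the Hessian term $A$ vanishes while $B$ tends to a positive constant, so only the quadratic-in-$a$ gradient term survives. The paper carries out exactly this program, doing the dominant-exponent analysis explicitly: for the full model with $\theta_i>0$ the term $\lambda_1^{-1}\lambda_m$ dominates $\sum_{(k,l)\in S_m}\lambda_k\lambda_l^{-1}$ for each $m\ge 2$, both the Hessian-type and gradient-squared-type sums converge to $\sum_{m\ge 2}(c_i^m-c_i^1)(c_j^m-c_j^1)$, and their difference (which is $\partial_{\theta_i}\partial_{\theta_j}\log f$) tends to $0$. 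So the technical step you defer to the end is precisely the content of the paper's proof, and your identification of it (including the caveat about ties among maximal exponents, which the paper addresses in the remark after the theorem) is on target.

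However, there is a genuine error in your reduction, masked by a compensating arithmetic slip. The risk-difference formula (\ref{eq:riskdif}) involves $(\pi_J/\pi_a)^{1/2}\Delta(\pi_a/\pi_J)^{1/2}$, i.e.\ the Laplacian must be applied to the \emph{square root} of the prior ratio; it is not proportional to $\Delta(\pi_a/\pi_J)/(\pi_a/\pi_J)$ (the two differ because $h^{-1/2}\Delta h^{1/2}\neq \tfrac12 h^{-1}\Delta h$ — only the sufficient condition for domination transfers between $h$ and $h^{1/2}$, not the exact $O(n^{-2})$ term). With your choice $f_a=\pi_a/\pi_J=|g_\beta|^{(a-1)/2}$ the Lemma gives the quadratic coefficient $\frac{(a-1)^2}{4}+\frac{a-1}{4}=\frac{a(a-1)}{4}$, whose minimizer is $a=1/2$, contradicting the theorem. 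You instead wrote $\frac{(a-1)^2+2(a-1)}{4}=\frac{a^2-1}{4}$, doubling the cross term $\frac{a-1}{2}\cdot\frac12 B$; this arithmetic slip happens to restore the correct minimizer $a=0$. The correct computation takes $f=(\pi_a/\pi_J)^{1/2}=|g_\beta|^{(a-1)/4}$, giving $\frac{(a-1)^2+2(a-1)}{16}B=\frac{a^2-1}{16}B$ in the limit, which is what the paper obtains (in the form $\frac14(t^2+t)$ with $a=2t+1$) and which is minimized at $a=0$. Relatedly, the passage in your second paragraph asserting $A/B\to\infty$ and an ``optimal $a-1\to-\infty$'' is backwards — the point is that $A\to 0$ while $B$ stays bounded away from zero — though your final paragraph reverts to the correct picture.
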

\begin{proof}
    In order to simplify calculation, we consider a class of priors $\pi_t(\theta,\beta) \propto \pi_J(\theta,\beta)^{2t+1}~(t\in\mathbb{R}).$
    Let $\hat{p}_J$ and $\hat{p}_t$ be Bayesian predictive densities based on $\pi_J$ and $\pi_t$, respectively.
    The asymptotic risk difference of $\hat{p}_J$ and $\hat{p}_t$ is obtained following \cite{komaki2006}:
\begin{align}
\label{eq:riskdif}
{E}[D(p(y;\theta,\beta);\hat{p}_J )]-
 {E}[D(p(y;\theta,\beta);\hat{p}_t )]
= -\frac{2}{n^2}\left(\frac{\pi_J}{\pi_t}\right)^{1/2}\Delta\left(\frac{\pi_t}{\pi_J} \right)^{1/2} + \mathrm{o}(n^{-2}).
\end{align}
Let $f = ({\pi_t}/{\pi_J})^{1/2} = {\pi_J}^t = |g|^{t/2}$, then
\begin{align*} 
 \left(\frac{\pi_J}{\pi_t}\right)^{1/2}\Delta\left(\frac{\pi_t}{\pi_J} \right)^{1/2}
 &=\sum_{i,j} g^{\theta_i\theta_j} \left\{ \partial_{\theta_i}\partial_{\theta_j}\log f + (\partial_{\theta_i}\log f)(\partial_{\theta_j}\log f) + \partial_{\theta_i}\log|g|^{1/2}\partial_{\theta_j} \log f \right\}\nonumber \\
 &=\sum_{i,j} g^{\theta_i\theta_j} \left\{ \partial_{\theta_i}\partial_{\theta_j}\log f
 + \frac{1}{4}(t^2+t)\partial_{\theta_i}\log|g|\partial_{\theta_j} \log |g| \right\}.
\end{align*}
Let $S_m = \{(k,l) \mid \bmod(k+l-1)=m \}$.
We have 
\begin{align*}
    \partial_{\theta_i}\partial_{\theta_j}\log f 
    &= \frac{t}{2}\sum_{m=1}^{p}\partial_{\theta_i}\partial_{\theta_j}\log \left(1+\sum_{(k,l)\in S_m} \frac{1}{p}\lambda_k\lambda_l^{-1}\right)\\
    &=  \frac{t}{2}\sum_{m=1}^{p} \left\{
    \frac{\partial_{\theta_i}\partial_{\theta_j}\sum_{(k,l)\in S_m} \frac{1}{p}\lambda_k\lambda_l^{-1}}{1+\sum_{(k,l)\in S_m} \frac{1}{p}\lambda_k\lambda_l^{-1}}\right.\\
    &~~~\left. -\frac{(\partial_{\theta_i}\sum_{(k,l)\in S_m} \frac{1}{p}\lambda_k\lambda_l^{-1})(\partial_{\theta_j}\sum_{(k,l)\in S_m} \frac{1}{p}\lambda_k\lambda_l^{-1})}{(1+\sum_{(k,l)\in S_m} \frac{1}{p}\lambda_k\lambda_l^{-1})^2}
    \right\}.
\end{align*}
We prove that $\partial_{\theta_i}\partial_{\theta_j}\log f$ converges to 0 as $\theta_1,\ldots,\theta_d\to \infty$. Then,
\begin{align*}
    &\lim_{\theta_1,\dots,\theta_d\to\infty}\partial_{\theta_i}\partial_{\theta_j}\log f\\
    &= \lim_{\theta_1,\dots,\theta_d\to\infty}
    \frac{t}{2}\sum_{m=1}^{p} \left\{
    \frac{\partial_{\theta_i}\partial_{\theta_j}\sum_{(k,l)\in S_m} \frac{1}{p}\lambda_k\lambda_l^{-1}}{1+\sum_{(k,l)\in S_m} \frac{1}{p}\lambda_k\lambda_l^{-1}}\right. \\
    &~~~\left. -\frac{(\partial_{\theta_i}\sum_{(k,l)\in S_m} \frac{1}{p}\lambda_k\lambda_l^{-1})(\partial_{\theta_j}\sum_{(k,l)\in S_m} \frac{1}{p}\lambda_k\lambda_l^{-1})}{(1+\sum_{(k,l)\in S_m} \frac{1}{p}\lambda_k\lambda_l^{-1})^2}
    \right\}.
\end{align*}
Let $c_i^k = \partial_{\theta_i}\log\lambda_k$.
Then,
\begin{align*}
    &\lim_{\theta_1,\dots,\theta_d\to\infty}
    \sum_{m=1}^{p} \frac{\partial_{\theta_i}\partial_{\theta_j}\sum_{(k,l)\in S_m} \frac{1}{p}\lambda_k\lambda_l^{-1}}{1+\sum_{(k,l)\in S_m} \frac{1}{p}\lambda_k\lambda_l^{-1}} \\
    &=\lim_{\theta_1,\dots,\theta_d\to\infty}
    \sum_{m=2}^{p} \frac{\partial_{\theta_i}\partial_{\theta_j}\sum_{(k,l)\in S_m} \frac{1}{p}\lambda_k\lambda_l^{-1}}{1+\sum_{(k,l)\in S_m} \frac{1}{p}\lambda_k\lambda_l^{-1}} \\ 
    &= \lim_{\theta_1,\dots,\theta_d\to\infty}
    \sum_{m=2}^{p} \frac{\frac{1}{p}\sum_{\substack{(k,l)\in S_m}}(c^k_i-c^l_i)(c^k_j-c^l_j)\lambda_k\lambda_l^{-1}}
    {1+\sum_{(k,l)\in S_m} \frac{1}{p}\lambda_k\lambda_l^{-1}} \\ 
    &= \lim_{\theta_1,\dots,\theta_d\to\infty}
    \sum_{m=2}^{p} \frac{\frac{1}{p} 
    (c_i^m-c_i^1)(c_j^m-c_j^1)\lambda_1^{-1}\lambda_m}
    {1+\sum_{(k,l)\in S_m} \frac{1}{p}\lambda_k\lambda_l^{-1}} \\
    &= \sum_{m=2}^{p}(c_i^m-c_i^1)(c_j^m-c_j^1),
\end{align*}
where the first equality follows from $\lambda_k=\lambda_l$ for all $(k,l)\in S_1$ due to (\ref{eq:real}) and the third equality holds because $\lambda_1^{-1}\lambda_m$ is dominant as $\theta_1,\ldots,\theta_d\to\infty$.
Also, because
\begin{align}
\label{eq:dainikou}
    &\lim_{\theta_1,\dots,\theta_d\to\infty}\sum_{m=1}^{p}\frac{(\partial_{\theta_i}\sum_{(k,l)\in S_m} \frac{1}{p}\lambda_k\lambda_l^{-1})(\partial_{\theta_j}\sum_{(k,l)\in S_m} \frac{1}{p}\lambda_k\lambda_l^{-1})}{(1+\sum_{(k,l)\in S_m} \frac{1}{p}\lambda_k\lambda_l^{-1})^2}\nonumber\\
    &=\lim_{\theta_1,\dots,\theta_d\to\infty}\sum_{m=2}^{p}\frac{(\partial_{\theta_i}\sum_{(k,l)\in S_m} \frac{1}{p}\lambda_k\lambda_l^{-1})(\partial_{\theta_j}\sum_{(k,l)\in S_m} \frac{1}{p}\lambda_k\lambda_l^{-1})}{(1+\sum_{(k,l)\in S_m} \frac{1}{p}\lambda_k\lambda_l^{-1})^2}\nonumber\\
    &=\sum_{m=2}^p(c_i^m-c_i^1)(c_j^m-c_j^1), \nonumber
\end{align}
we have
\begin{align*}
    \lim_{\theta_1,\dots,\theta_d\to\infty}\partial_{\theta_i}\partial_{\theta_j}\log f = 0.
\end{align*}
Therefore, from (\ref{eq:riskdif})
\begin{align*}
    &\lim_{\theta_1,\dots,\theta_d\to\infty}{E}[D(p(y;\theta,\beta);\hat{p}_J )]-{E}[D(p(y;\theta,\beta);\hat{p}_t )]\\
    &= -\frac{2}{n^2}\lim_{\theta_1,\dots,\theta_d\to\infty}\left(\frac{\pi_J}{\pi_t}\right)^{1/2}\Delta\left(\frac{\pi_t}{\pi_J} \right)^{1/2} + \mathrm{o}(n^{-2})\\
    &= -\frac{2}{n^2}\lim_{\theta_1,\dots,\theta_d\to\infty}
    \sum_{i,j} g^{\theta_i\theta_j} \left\{ \partial_{\theta_i}\partial_{\theta_j}\log f
     + \frac{1}{4}(t^2+t)\partial_{\theta_i}\log|g|\partial_{\theta_j} \log |g| \right\}+ \mathrm{o}(n^{-2})\\
     &= -\frac{2}{n^2}\lim_{\theta_1,\dots,\theta_d\to\infty}
    \sum_{i,j} g^{\theta_i\theta_j} \left\{ \frac{1}{4}\left\{\left(t+\frac{1}{2}\right)^2-\frac{1}{4}\right\}\partial_{\theta_i}\log|g|\partial_{\theta_j} \log |g| \right\}+ \mathrm{o}(n^{-2})
\end{align*}
and it is maximized when $t=-1/2$.
\end{proof}
Theorem~\ref{thm:optim-full} assumes that $\theta_1,\ldots,\theta_d$ are positive. The same consequence holds for other cases whenever only a term in $\sum_{(k,l)\in S_m}\lambda_k\lambda_l^{-1}$ is dominant for each $m$.

\subsection{Exchangeable model}

If $\lambda(\theta)=(e^{-(p-1)\theta},e^\theta,\dots,e^\theta)$, the matrix $R(\theta)$ has exchangeable correlations. Indeed, we have
\begin{align*}
 r_{ij} &= \sum_{k=1}^p q_{ik}\lambda_k\bar{q}_{jk}
 \\
 &=e^{-(p-1)\theta}q_{i1}\bar{q}_{j1} + e^\theta\sum_{k=2}^p q_{ik}\bar{q}_{jk}
 \\
 &= \frac{1}{p}\left(
 e^{-(p-1)\theta} + e^\theta \sum_{k=2}^p \omega^{(i-j)(k-1)}
 \right)
 \\
 &= \frac{1}{p}\left(
 e^{-(p-1)\theta} + e^\theta (p\delta_{ij}-1)
 \right)
\end{align*}
since $\sum_{k=2}^p\omega^{(i-j)(k-1)}=-1$ if $i\neq j$.
The $p\times p$ submatrix $g_\alpha$ and the component regarding $\theta$ of its Fisher information matrix are
\begin{align*}
        g_\alpha &= -\frac{1}{p^2}\left(2\sinh \frac{p\theta}{2}\right)^2 \alpha^{-1}(\alpha^{-1})^\top 
    + \left(\frac{1}{p}\left(2\sinh \frac{p\theta}{2}\right)^2+2\right)\mathrm{diag}(\alpha_1^{-2},\dots, \alpha_p^{-2}),\\
    g_{\theta\theta}&=p(p-1)/2,
\end{align*}
respectively.
The Jeffreys prior is 
\begin{align*}
    \pi_J(\theta,\alpha) \propto \left( \frac{1}{p} \left(2\sinh \frac{p\theta}{2}\right)^2 + 2\right)^{ \frac{p-1}{2} }\left( \prod_{i=1}^p \alpha_i^{-1} \right).
\end{align*}

We have some theorems about shrinkage priors in the settings. Recall that $\beta_i=\log\alpha_i$.
\begin{prop}
Let $\pi_c(\theta, \beta) \propto \pi_J(\theta, \beta)^c~(c\in\mathbb{R})$.
The Bayesian predictive density based on $\pi$ asymptotically dominates the Bayesian predictive density based on $\pi_J$ regarding the KL risk when $-1\leq c < 1$.
\end{prop}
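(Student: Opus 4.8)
The plan is to mimic the proof of Theorem~\ref{thm:optim-full}. The exchangeable model is log-linear with $d=1$, so both the one-dimensional identity (\ref{eq:finvf}) and the Komaki expansion (\ref{eq:riskdif}) are available. Writing $\pi_c\propto\pi_J^{\,c}$ and setting
\[
 f=\left(\frac{\pi_c}{\pi_J}\right)^{1/2}=\pi_J^{\,(c-1)/2},
\]
the expansion (\ref{eq:riskdif}) gives ${E}[D(p(y;\theta,\beta);\hat{p}_J)]-{E}[D(p(y;\theta,\beta);\hat{p}_c)]=-\tfrac{2}{n^2}f^{-1}\Delta f+\mathrm{o}(n^{-2})$, so it is enough to show $f^{-1}\Delta f<0$ for every $\theta$ when $-1\le c<1$. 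It is important here to carry the square root, i.e.\ to work with $f=(\pi_c/\pi_J)^{1/2}$ rather than $\pi_c/\pi_J$; this is what places the boundary of the admissible range at $c=-1$.

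By (\ref{eq:finvf}) with $d=1$, and using $g^{\theta\theta}=2/(p(p-1))>0$ (a positive constant, since $g$ is block diagonal and $g_{\theta\theta}=p(p-1)/2$),
\[
 f^{-1}\Delta f = g^{\theta\theta}\Bigl\{\partial_\theta^2\log f+(\partial_\theta\log f)^2+(\partial_\theta\log|g|^{1/2})(\partial_\theta\log f)\Bigr\}.
\]
From the formulas preceding the proposition, in the coordinates $(\theta,\beta)$ one has $\pi_J\propto(s+2)^{(p-1)/2}$ and $|g|\propto(s+2)^{p-1}$, where $s=\frac1p\left(2\sinh\frac{p\theta}{2}\right)^2$. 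Hence $\log f=\frac{(c-1)(p-1)}{4}\log(s+2)+\mathrm{const}$ and $\log|g|^{1/2}=\frac{p-1}{2}\log(s+2)+\mathrm{const}$, so everything is expressed through $\log(s+2)$ and its derivatives. Substituting $u=\cosh(p\theta)\ge1$, so that $s+2=\frac2p(u+p-1)$, $(s')^2=4(u^2-1)$ and $s''=2pu$, a short computation collapses the brace to
\[
 b\left[\frac{s''}{s+2}+A\left(\frac{s'}{s+2}\right)^2\right]=\frac{b\,p^2}{(u+p-1)^2}\Bigl[(1+A)u^2+(p-1)u-A\Bigr],
\]
with $b=\frac{(c-1)(p-1)}{4}$ and $A=b-1+\frac{p-1}{2}$; one checks $1+A=\frac{(p-1)(c+1)}{4}$, so the bracket equals $\frac{(p-1)(c+1)}{4}(u^2-1)+(p-1)u+1$, whose value at $u=1$ is $p$ for every $c$.

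For $-1\le c<1$ we have $b<0$ and $1+A\ge0$, hence the bracketed polynomial is $\ge(p-1)u+1>0$ for all $u\ge1$; therefore $f^{-1}\Delta f<0$ at every $\theta$, and the Komaki expansion shows that $\hat{p}_c$ asymptotically dominates $\hat{p}_J$. The boundary case $c=-1$ is included because the polynomial then reduces to $(p-1)u+1$, still strictly positive on $[1,\infty)$.

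The main obstacle is the algebraic bookkeeping needed to bring the brace into the displayed factored form and, in particular, to identify the leading coefficient $1+A=\frac{(p-1)(c+1)}{4}$; once this is done the sharpness of the range $-1\le c<1$ is transparent, since for $c<-1$ the leading coefficient becomes negative and the bracket, hence $f^{-1}\Delta f$, turns positive for large $|\theta|$, whereas $c=1$ gives $\pi_c=\pi_J$ and $c>1$ makes $b>0$, which destroys the strict negativity. Apart from that, the argument is a one-dimensional specialization of the computation already carried out for Theorem~\ref{thm:optim-full}.
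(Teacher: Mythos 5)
Your proof is correct, and its skeleton is the same as the paper's: you take $f=(\pi_c/\pi_J)^{1/2}=|g|^{\gamma}$ with $\gamma=(c-1)/4$, reduce the claim via the risk expansion (\ref{eq:riskdif}) to showing $f^{-1}\Delta f<0$, and evaluate the Laplacian through the one-dimensional specialization of (\ref{eq:finvf}). Where you diverge is the final inequality. The paper writes the brace as $\gamma\,\partial_\theta^2\log|g_\alpha|+(\gamma^2+\tfrac{\gamma}{2})(\partial_\theta\log|g_\alpha|)^2$ and kills each term separately: $\log|g_\alpha|=(p-1)\log(ae^{p\theta}+be^{-p\theta}+c)+\mathrm{const}$ is convex in $\theta$, so the first term is $\le 0$ for $\gamma<0$, and $\gamma^2+\tfrac{\gamma}{2}\le 0$ exactly on $-\tfrac12\le\gamma<0$, i.e.\ $-1\le c<1$. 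You instead substitute $u=\cosh(p\theta)$ and show the combined bracket is the polynomial $\frac{(p-1)(c+1)}{4}(u^2-1)+(p-1)u+1$, which is strictly positive on $u\ge 1$ when $c\ge-1$; multiplied by $b<0$ this gives strict negativity everywhere. The two computations are algebraically identical (your $1+A\ge 0$ is the paper's $\gamma\ge-1/2$), so this is a matter of presentation: the paper's convexity observation is shorter and term-by-term, while your explicit form makes visible exactly where $c=-1$ enters (the sign of the leading coefficient) and yields strict rather than weak negativity. One caution on your closing remark: for $c<-1$ your computation shows only that the superharmonicity criterion fails for large $\theta$, not that domination itself fails, so the range is sharp only for this sufficient condition.
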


\begin{proof}
Let
$f=(\pi_c/\pi_J)^{1/2}=|g|^\gamma$, where $\gamma=(c-1)/4$.
We show $f$ is superharmonic when $-1/2\leq\gamma<0.$
We have
\[
 f^{-1}\Delta f = \frac{2}{p(p-1)}\left\{\gamma (\partial_\theta^2\log|g_\alpha|) + (\gamma^2+\frac{\gamma}{2})(\partial_\theta \log|g_\alpha|)^2\right\}.
\]
 Because $\log|g_\alpha|$ can be expressed as $\log|g_\alpha|=(p-1)\log(ae^{p\theta}+be^{-p\theta}+c) + (\rm{terms~independent~of~}\theta)$ using $a,b,c>0$, $\log|g_\alpha|$ is convex as a function of $\theta$ and we have $f^{-1}\Delta f\leq 0$ when $-1/2\leq\gamma<0.$     
\end{proof}

\begin{cor}
\label{cor:optimal}
    Let $\pi_c(\theta,\beta) \propto \pi_J(\theta,\beta)^{c}~(c\in\mathbb{R}).$
    Regarding the asymptotic KL risk, when $\theta \to\infty$, $c=0$ is optimal, that is the prior $\pi_S(\theta,\beta)\propto 1$ is optimal.
\end{cor}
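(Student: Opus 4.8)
The plan is to mirror the proof of Theorem~\ref{thm:optim-full}, but now I can shortcut the computation by reusing the closed form of $f^{-1}\Delta f$ established in the preceding Proposition. First I would write $\pi_c = |g|^{c/2}$ (up to a constant, since $g_{\theta\theta}=p(p-1)/2$ is constant), so that
\[
f := \left(\frac{\pi_c}{\pi_J}\right)^{1/2} = |g|^{\gamma},\qquad \gamma = \frac{c-1}{4},
\]
which is exactly the parametrization used in the Proposition. Writing $\hat p_c$ for the Bayesian predictive density based on $\pi_c$, the asymptotic risk expansion of \cite{komaki2006} gives
\[
{E}[D(p(y;\theta,\beta);\hat p_J)] - {E}[D(p(y;\theta,\beta);\hat p_c)] = -\frac{2}{n^2}\, f^{-1}\Delta f + \mathrm{o}(n^{-2}),
\]
so maximizing this risk difference over $c$ (i.e.\ making $\hat p_c$ perform best relative to $\hat p_J$) is the same as minimizing $f^{-1}\Delta f$.

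Next I would insert the formula proved in the Proposition,
\[
f^{-1}\Delta f = \frac{2}{p(p-1)}\left\{\gamma\,\partial_\theta^2\log|g_\alpha| + \left(\gamma^2+\tfrac{\gamma}{2}\right)\left(\partial_\theta\log|g_\alpha|\right)^2\right\},
\]
and pass to the limit $\theta\to\infty$. Using $\log|g_\alpha| = (p-1)\log(ae^{p\theta}+be^{-p\theta}+c) + (\text{terms independent of }\theta)$ with $a,b,c>0$ (here $a=b=1/p$, $c=2(p-1)/p$), the term $ae^{p\theta}$ dominates as $\theta\to\infty$, so $\partial_\theta\log|g_\alpha| \to p(p-1)$ and $\partial_\theta^2\log|g_\alpha| \to 0$. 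Hence
\[
\lim_{\theta\to\infty} f^{-1}\Delta f = 2p(p-1)\left(\gamma^2+\tfrac{\gamma}{2}\right) = 2p(p-1)\left\{\left(\gamma+\tfrac14\right)^2-\tfrac{1}{16}\right\},
\]
which is minimized at $\gamma=-1/4$, that is $c=0$. Translating back, $\pi_0 \propto 1 = \pi_S$, which proves the corollary.

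Finally I would note, exactly as after Theorem~\ref{thm:optim-full}, that ``$c=0$ is optimal when $\theta\to\infty$'' is to be read as: the limit of the leading $\mathrm{O}(n^{-2})$ coefficient of the risk difference is maximized at $c=0$; the $\mathrm{o}(n^{-2})$ remainder plays no role in this comparison. I do not expect a real obstacle here, since the only analytic fact needed is the convexity/asymptotics of $\log|g_\alpha|$ in $\theta$ already recorded in the Proposition. The one point to handle with care is the bookkeeping of signs — that maximizing the risk difference ${E}[D(\cdot;\hat p_J)]-{E}[D(\cdot;\hat p_c)]$ corresponds to minimizing $f^{-1}\Delta f$, hence minimizing $\gamma^2+\gamma/2$ — together with carrying the optimal value $\gamma=-1/4$ back through $\gamma=(c-1)/4$ to conclude $c=0$.
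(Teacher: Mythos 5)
Your proposal is correct and follows essentially the same route as the paper: reuse the closed form of $f^{-1}\Delta f$ from the Proposition with $\gamma=(c-1)/4$, show $\partial_\theta\log|g_\alpha|\to p(p-1)$ and $\partial_\theta^2\log|g_\alpha|\to 0$ as $\theta\to\infty$, and minimize $\gamma^2+\gamma/2$ at $\gamma=-1/4$, i.e.\ $c=0$. The only (immaterial) difference is that you derive the limits from the representation $\log|g_\alpha|=(p-1)\log(ae^{p\theta}+be^{-p\theta}+c)+\mathrm{const}$ rather than from the explicit $\sinh/\cosh$ derivatives the paper writes out, and your explicit sign bookkeeping is if anything cleaner than the paper's.
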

\begin{proof}
Let
$f=(\pi_c/\pi_J)^{1/2}=|g|^\gamma$, where $\gamma=(c-1)/4$.
We have
\begin{align}
 f^{-1}\Delta f = \frac{2}{p(p-1)}\left\{\gamma (\partial_\theta^2\log|g_\alpha|) + (\gamma^2+\frac{\gamma}{2})(\partial_\theta \log|g_\alpha|)^2\right\}.    
\label{eq:cor31}
\end{align}
Let $S=\sinh(p\theta/2)$ and $C=\cosh(p\theta/2)$.
Here,
\begin{align}
\label{eq:partial_log1}
    \partial_\theta \log|g| &= (p-1)\frac{4SC}{\frac{4}{p}S^2 + 2},\\
\label{eq:partial_log2}
    \partial_\theta\partial_\theta \log|g| &=
    \frac{2p(p-1)}{(\frac{4}{p}S^2 + 2)^2}\left( (C^2+S^2)(\frac{4}{p}S^2 + 2) -\frac{8}{p}S^2C^2 \right)
\end{align}
and as $\theta \to \infty$
\begin{align*}
    \partial_\theta \log|g| &\to p(p-1),\\
    \partial_\theta^2 \log|g| &\to 0.
\end{align*}
Thus (\ref{eq:cor31}) is maximized by $\gamma = -1/4$.
\end{proof}

We plot the difference of asymptotic KL risks of the Bayesian predictive densities of the Jeffreys prior $\pi_J$ and the proposed prior for exchangeable correlation structure models.
Let $\hat{p}_{\gamma}$ be the Bayesian predictive density based on the prior $\pi_{\gamma}(\theta,\beta)\propto \pi_J(\theta,\beta)^{4\gamma + 1}$.
The asymptotic risk difference between $\pi_J$ and $\pi_S$ in this model is evaluated from (\ref{eq:riskdif}), (\ref{eq:cor31}), (\ref{eq:partial_log1}), and (\ref{eq:partial_log2}) as
\begin{align}
\label{eq:plot_difference}
&{E}[D(p(y;\theta,\beta);\hat{p}_J )]-
 {E}[D(p(y;\theta,\beta);\hat{p}_{\gamma} )] \nonumber\\
&= -\frac{2}{n^2}\left(\frac{\pi_J}{\pi_S}\right)^{1/2}\Delta\left(\frac{\pi_S}{\pi_J} \right)^{1/2} + \mathrm{o}(n^{-2})\nonumber\\
&= -\frac{2}{n^2}\frac{2}{p(p-1)}\left\{\gamma (\partial_\theta^2\log|g_\alpha|) + (\gamma^2 + \frac{\gamma}{2})(\partial_\theta \log|g_\alpha|)^2\right\}+ \mathrm{o}(n^{-2})\nonumber\\
&= -\frac{2}{n^2}\frac{2}{p(p-1)}\left\{\gamma\frac{2p(p-1)}{(\frac{4}{p}S^2 + 2)^2}\left( (C^2+S^2)(\frac{4}{p}S^2 + 2) -\frac{8}{p}S^2C^2 \right) + (\gamma^2 + \frac{\gamma}{2})\left((p-1)\frac{4SC}{\frac{4}{p}S^2 + 2}\right)^2\right\}+ \mathrm{o}(n^{-2}).
\end{align}

The asymptotic KL risk difference (\ref{eq:plot_difference}) is shown in Figure \ref{fig:KLrisk}.
The sample size $n$ is $n=100$ in every plot.
The risk difference ${E}[D(p(y;\theta,\beta);\hat{p}_J )]-
 {E}[D(p(y;\theta,\beta);\hat{p}_\gamma )]$ is positive, which shows that $\pi_\gamma$ dominates $\pi_J$ asymptotically.
 As shown in Corollary \ref{cor:optimal}, $\gamma=-1/4$ is optimal when $\theta\to\infty$, and the risk differences when $\gamma=-1/4$, $p=2,3,10$ are positive even in the area where $\theta$ is not around 0. 
 When $p=2$, the risk difference is large around $\theta=0$, and it means that proposed prior decreases the risk effectively around $D_\lambda=I_2$.
 On the other hand, when $p=3, 10$ and $\gamma = -1/4, -1/100$, the risk difference around $\theta=0$ is smaller, and it shows the possibility that stronger shrinkage around $\theta=0$ is effective.  

\begin{figure}[htbp]
    \centering
    \includegraphics[width=0.48\textwidth]{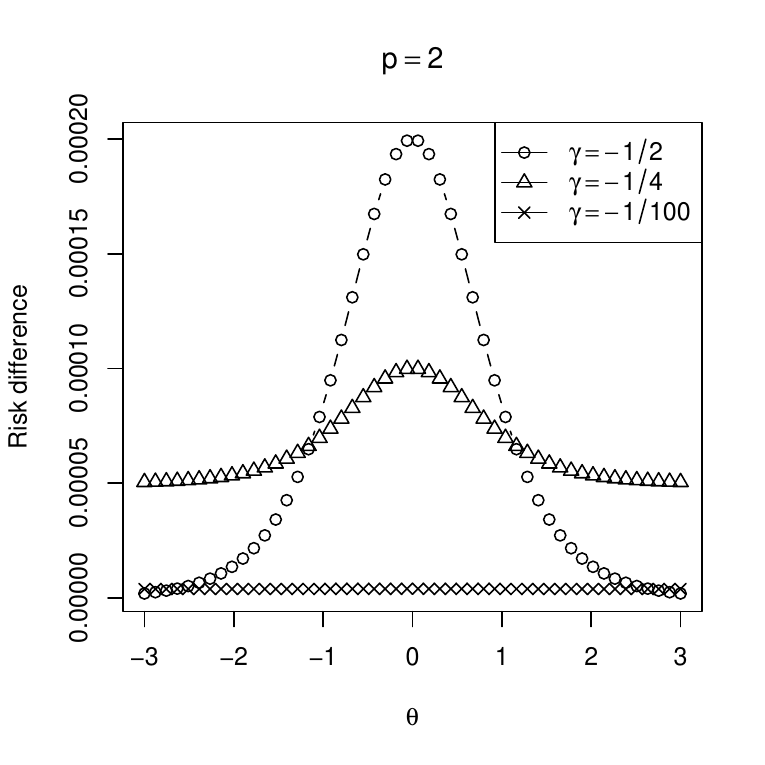}
    \includegraphics[width=0.48\textwidth]{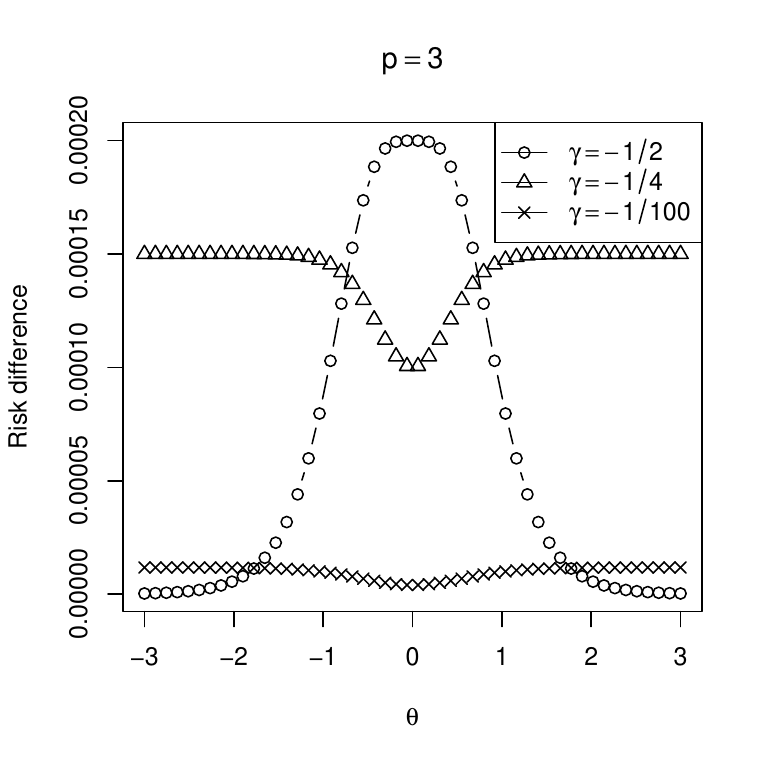}
    \includegraphics[width=0.48\textwidth]{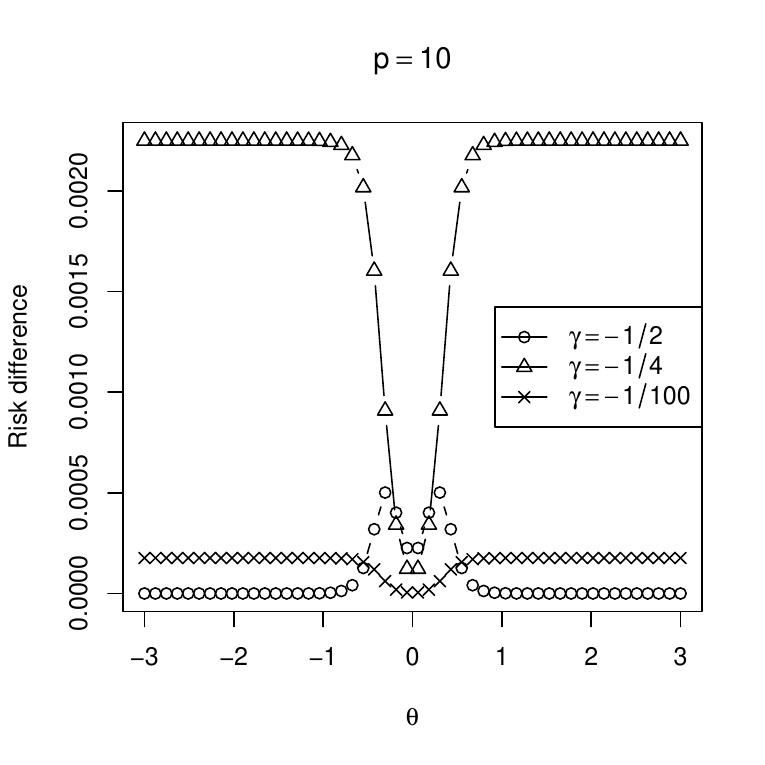}
    \caption{KL risk difference ${E}[D(p(y;\theta,\beta);\hat{p}_J )]-
 {E}[D(p(y;\theta,\beta);\hat{p}_\gamma )]$ for $p=2,3,10$ and $\gamma=-1/2,-1/4,-1/100$
 }
    \label{fig:KLrisk}
\end{figure}

\section{Acknowledgements}
The authors thank Fumiyasu Komaki for helpful comments to the early version of this work. 
This work was supported in part by JSPS KAKENHI Grant Numbers JP20K23316, JP21K11781 and JP22H00510.

\bibliographystyle{plainnat}
\bibliography{circulant}

\end{document}